\newfont{\footsc}{cmcsc10 at 8truept}
\newfont{\footbf}{cmbx10 at 8truept}
\newfont{\footrm}{cmr10 at 10truept}
\newtheorem{theorem}{Theorem}
\newtheorem{proposition}[theorem]{Proposition}
\newenvironment{proof}[1][Proof]{\noindent{\textbf {#1}  }}  {\hfill$\Box$\bigskip}
\def\blfootnote{\xdef\@thefnmark{}\@footnotetext}
\begin{document}

\title{Spectra of the blow-up graphs}
\author{
Carla Oliveira\thanks{National School of Statistics, Rio de Janeiro, Brazil;\textit{email:
carla.oliveira@ibge.gov.br}} 
\thanks{Research supported by CNPq Grant 305454/2012-9.} $\;\;$
Leonardo de Lima\thanks{Department of Production Engineering, Federal
Center of Technological Education, Rio de Janeiro, Brazil;\textit{email:
llima@cefet-rj.br}} 
\thanks{Research supported by Programa Jovem Cientista do Nosso Estado, FAPERJ Grant E-26/102.218/2013, and CNPq Grant 305867/2012-1.} \ 
and$\;$
Vladimir Nikiforov\thanks{Department of Mathematical Sciences, University of Memphis, Memphis TN 38152, USA; \textit{email: vnikifrv@memphis.edu}} }
\date{}
\maketitle

\begin{abstract}
Let $G$ be graph on $n$ vertices and $G^{(t)}$ its blow-up graph of order $t.$ In this paper, we determine all eigenvalues of the Laplacian and the signless Laplacian matrix of $G^{(t)}$ and its complement $\overline{G^{(t)}}.$
\end{abstract}
\textit{Keywords: blow-up graphs, eigenvalues, Laplacian, signless Laplacian.}

\section{Introduction}

\label{intro}

Let $G$ be a graph on $n$ vertices with degree sequence $d_{1},d_{2}%
,\ldots,d_{n}$ in non-incresing order. We denote
the complement of $G$ by $\overline{G}$ $.$ Write $A=A(G)$ for the adjacency
matrix of $G$ and let $D=D(G)$ be the diagonal matrix of the row-sums of $A$,
i.e., the degrees of $G$. The Laplacian $L(G)$ and signless Laplacian $Q(G)$
of $G$ are defined as $L(G)=D-A$ and $Q(G)=D+A$. The eigenvalues of $A(G)$,
$L(G)$ and $Q(G)$ arranged in non-increasing order are denoted by $\lambda
_{1},\ldots,\lambda_{n}$, $\mu_{1},\ldots,\mu_{n}$ and $q_{1},\ldots,q_{n}.$

For any graph $G$ and integer $t\geq1,$ write $G^{\left(  t\right)  }$ for the
graph obtained by replacing each vertex $u$ of $G$ by a set $V_{u}$ of $t$
independent vertices and every edge $\left\{  u,v\right\}  $ of $G$ by a
complete bipartite graph with parts $V_{u}$ and $V_{v}.$ Usually $G^{\left(
t\right)  }$ is called a \textit{blow-up} of $G.$ See Figure \ref{fig:blowup}
for a blow-up example of order $2$ and $3$. \begin{figure}[h]
\begin{center}
\includegraphics{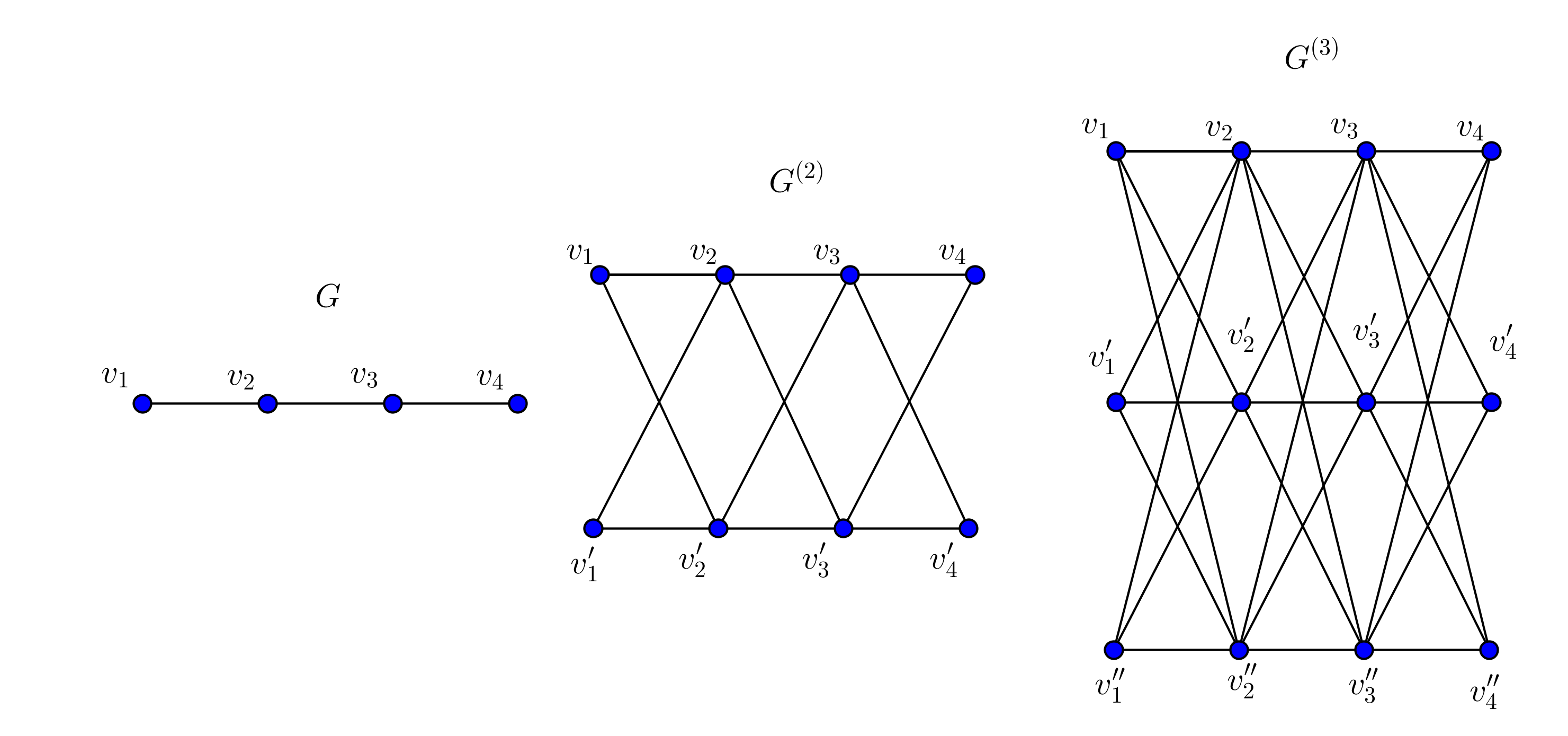}
\end{center}
\par
\label{fig:blowup}\caption{A graph $G$ and its blow-up of order $2$ and $3$.}%
\end{figure}

The blow-up of a graph $G$ has a useful algebraic characterization: if $A$ is
the adjacency matrix of $G,$ then the adjacency matrix $A\left(  G^{\left(
t\right)  }\right)  $ of $G^{\left(  t\right)  }$ is given by
\[
A\left(  G^{\left(  t\right)  }\right)  =A\otimes J_{t}%
\]
where $\otimes$ is the Kronecker product and $J_{t}$ is the all ones matrix of
order $t.$ This observation yields the following fact.

\begin{proposition}
\label{pro1}The\ eigenvalues of $G^{\left(  t\right)  }$ are $t\lambda_{1}\left(
G\right)  ,\ldots,t \lambda_{n}\left(  G\right)  ,$ together with $n\left(
t-1\right)  $ additional $0$'s.
\end{proposition}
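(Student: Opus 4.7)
The plan is to apply the well-known spectral formula for Kronecker products directly to the identity $A(G^{(t)}) = A \otimes J_t$ already noted in the text. Recall that if $M$ has eigenvalues $\alpha_1,\ldots,\alpha_m$ and $N$ has eigenvalues $\beta_1,\ldots,\beta_k$, then the $mk$ eigenvalues of $M \otimes N$ are exactly the products $\alpha_i \beta_j$, a fact verified by observing that $(M\otimes N)(u\otimes v) = (Mu)\otimes(Nv) = \alpha_i\beta_j\,(u\otimes v)$ whenever $Mu=\alpha_i u$ and $Nv=\beta_j v$, and that such tensor products of eigenvectors form a basis when $M$ and $N$ are each diagonalizable (which holds here since $A$ and $J_t$ are symmetric).

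The first step is to compute the spectrum of $J_t$. Since $J_t$ has rank $1$ and trace $t$, its eigenvalues are $t$ (with multiplicity $1$, eigenvector the all-ones vector $\mathbf{1}_t$) and $0$ (with multiplicity $t-1$, whose eigenspace is the orthogonal complement of $\mathbf{1}_t$).

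The second step is to enumerate the products. Pairing each $\lambda_i(G)$ with the eigenvalue $t$ of $J_t$ yields the $n$ values $t\lambda_1(G),\ldots,t\lambda_n(G)$. Pairing each $\lambda_i(G)$ with each of the $t-1$ zero eigenvalues of $J_t$ contributes the value $0$ a total of $n(t-1)$ times. Together these account for all $nt$ eigenvalues of $A(G^{(t)})$, matching the order of the matrix, and giving exactly the list in the statement.

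There is no real obstacle; the only point to double-check is that the multiplicities add up correctly and that we may genuinely apply the Kronecker eigenvalue formula, which is justified by the symmetry of both factors. If desired, one can make the argument entirely self-contained by exhibiting the eigenvectors: for an eigenvector $v$ of $A$ with eigenvalue $\lambda$, the vector $v\otimes \mathbf{1}_t$ is an eigenvector of $A\otimes J_t$ with eigenvalue $t\lambda$, while for any $w\in\mathbf{1}_t^{\perp}$ and any of the $n$ standard basis vectors $e_i$, the vector $e_i\otimes w$ lies in the kernel of $A\otimes J_t$, producing an $n(t-1)$-dimensional zero eigenspace.
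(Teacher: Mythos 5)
Your proof is correct and follows exactly the route the paper intends: the paper states Proposition~\ref{pro1} as an immediate consequence of the identity $A(G^{(t)})=A\otimes J_t$ and leaves the details implicit, which are precisely the Kronecker-product eigenvalue formula and the spectrum of $J_t$ that you supply. The multiplicity count and the explicit eigenvectors $v\otimes\mathbf{1}_t$ and $e_i\otimes w$ are both accurate.
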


For the complements of graph blow-ups, one can easily check the following fact.

\begin{proposition}
\label{pro2}The\ eigenvalues of $\overline{G^{\left(  t\right)  }}$ are
$t\lambda_{1}\left(  \overline{G}\right)  +t-1,\ldots,t\lambda_{n}\left(  \overline
{G}\right)  +t-1,$ together with $n\left(  t-1\right)  $ additional $\left(
-1\right)  $'s.
\end{proposition}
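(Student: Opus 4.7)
The plan is to mirror the Kronecker-product argument behind Proposition~\ref{pro1}, applied to the complement. The first step is to rewrite $A(\overline{G^{(t)}})$ in block form. Using $J_{nt}=J_n\otimes J_t$ and $I_{nt}=I_n\otimes I_t$, together with $A(G^{(t)})=A\otimes J_t$, I get
\[
A(\overline{G^{(t)}}) = J_n\otimes J_t - I_n\otimes I_t - A\otimes J_t = (J_n-A)\otimes J_t - I_n\otimes I_t.
\]
Since $J_n-A = A(\overline{G})+I_n$, a small rearrangement gives the key identity
\[
A(\overline{G^{(t)}}) = A(\overline{G})\otimes J_t \;+\; I_n\otimes (J_t-I_t).
\]

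Next I would check that the two summands commute. This reduces to the identity $J_t(J_t-I_t)=(J_t-I_t)J_t=(t-1)J_t$, which is immediate. Commutativity guarantees a common eigenbasis of $\mathbb{R}^{nt}$.

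To assemble the eigenvectors, take an orthonormal eigenbasis $v_1,\dots,v_n$ of $A(\overline{G})$ with eigenvalues $\lambda_1(\overline{G}),\dots,\lambda_n(\overline{G})$, and an orthogonal basis of $\mathbb{R}^t$ consisting of the all-ones vector $\mathbf{1}_t$ together with $t-1$ vectors $w_1,\dots,w_{t-1}$ orthogonal to $\mathbf{1}_t$. Recall that $J_t\mathbf{1}_t=t\mathbf{1}_t$, $J_tw_j=0$, $(J_t-I_t)\mathbf{1}_t=(t-1)\mathbf{1}_t$, and $(J_t-I_t)w_j=-w_j$. Substituting the tensor products $v_i\otimes \mathbf{1}_t$ and $v_i\otimes w_j$ into the identity above yields, for each $i$, a single eigenvalue $t\lambda_i(\overline{G})+(t-1)$ (from $v_i\otimes \mathbf{1}_t$), and $t-1$ further eigenvalues equal to $-1$ (from the $v_i\otimes w_j$). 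In total this accounts for all $nt$ eigenvalues, matching exactly the list in the statement.

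The only genuine obstacle is landing on the right algebraic decomposition of $A(\overline{G^{(t)}})$; once the identity and the commutativity are in place, the eigenvalue bookkeeping is mechanical and essentially the same as the proof of Proposition~\ref{pro1}.
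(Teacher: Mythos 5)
Your proof is correct, and it follows exactly the Kronecker-product route that the paper implies but does not write out (Proposition~\ref{pro2} is stated as an easy check following the identity $A(G^{(t)})=A\otimes J_t$). The decomposition $A(\overline{G^{(t)}})=A(\overline{G})\otimes J_t+I_n\otimes(J_t-I_t)$ and the tensor eigenbasis $v_i\otimes\mathbf{1}_t$, $v_i\otimes w_j$ account for all $nt$ eigenvalues as claimed.
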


The goal of this paper is to determine the spectra of $L(G^{(t)})$,
$Q(G^{(t)}),$ and $Q(\overline{G^{(t)}}),$ which is a more difficult task than
for the adjacency matrix$.$ We also note that the spectrum of $L(\overline
{G^{(t)}})$ is obtained immediately from our results as $\mu_{n-i}%
(\overline{G})=n-\mu_{i}(G),$ for $i=1,\ldots,n-1,$ (see e.g.
\cite{AM85}).

\section{Main Results}

\label{main}

In this section, we prove the main results of the paper.

\begin{theorem}\label{thm1}
If $G$ is a graph on $n$ vertices, the Laplacian eigenvalues of $G^{(t)}$ are $t{\mu}_1, \ldots,t{\mu}_{n}$ and $td_1,\ldots,td_n$, where each $td_i$ has multiplicity $(t-1)$ for $i=1, \ldots,n$.
\end{theorem}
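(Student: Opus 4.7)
The plan is to exploit the Kronecker product structure inherited from $A(G^{(t)})=A(G)\otimes J_t$. First I would express the Laplacian of the blow-up explicitly: since every vertex in the class $V_u$ has degree $t\cdot d_G(u)$ in $G^{(t)}$, the degree matrix satisfies $D(G^{(t)})=tD(G)\otimes I_t$, which gives
\[
L(G^{(t)}) \;=\; tD(G)\otimes I_t \;-\; A(G)\otimes J_t .
\]

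Next I would search for eigenvectors of $L(G^{(t)})$ of the tensor form $x\otimes y$ with $x\in\mathbb{R}^n$ and $y\in\mathbb{R}^t$, and split on the behavior of $y$ under $J_t$. The key fact is that $J_t$ has eigenvalue $t$ on the all-ones vector $\mathbf{1}$ and eigenvalue $0$ on the $(t-1)$-dimensional orthogonal complement $\mathbf{1}^\perp$. I would handle the two cases in turn:
\begin{itemize}
\item[(i)] If $y\in\mathbf{1}^\perp$, then $J_t y=0$, so
\[
L(G^{(t)})(x\otimes y) \;=\; t(D(G)x)\otimes y - (A(G)x)\otimes 0 \;=\; t(D(G)x)\otimes y .
\]
Taking $x=e_i$ (which is an eigenvector of the diagonal matrix $D(G)$ with eigenvalue $d_i$), we obtain an eigenvector with eigenvalue $td_i$. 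Ranging over $i=1,\dots,n$ and over a basis of $\mathbf{1}^\perp$, this supplies $n(t-1)$ eigenvectors, and each $td_i$ appears with multiplicity $t-1$, as claimed.

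\item[(ii)] If $y=\mathbf{1}$, then $J_t y=ty$, and
\[
L(G^{(t)})(x\otimes \mathbf{1}) \;=\; t(D(G)x)\otimes\mathbf{1} - t(A(G)x)\otimes\mathbf{1} \;=\; t\bigl(L(G)x\bigr)\otimes\mathbf{1}.
\]
Thus if $L(G)x=\mu_i x$, the vector $x\otimes\mathbf{1}$ is an eigenvector of $L(G^{(t)})$ with eigenvalue $t\mu_i$. This produces the $n$ eigenvalues $t\mu_1,\dots,t\mu_n$.
\end{itemize}

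Finally I would check that these $n(t-1)+n=nt$ eigenvectors are linearly independent; this is immediate because the vectors in case (i) lie in $\mathbb{R}^n\otimes\mathbf{1}^\perp$ while those in case (ii) lie in $\mathbb{R}^n\otimes\langle\mathbf{1}\rangle$, and within each case the factors form a basis of the respective subspace. Since $L(G^{(t)})$ is symmetric of order $nt$, the list is exhaustive and the theorem follows. The proof is essentially direct once one writes down the tensor decomposition of $L(G^{(t)})$; the only point that needs a little care is the bookkeeping of multiplicities in case (i), which I do not expect to be a genuine obstacle.
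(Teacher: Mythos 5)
Your proof is correct and is essentially the same argument as the paper's: your vectors $x_i\otimes\mathbf{1}$ are exactly the paper's stacked eigenvectors $\mathbf{y}_i$, and your $e_i\otimes y$ with $y\in\mathbf{1}^\perp$ span the same eigenspaces as the paper's difference vectors $E_i^k$. The Kronecker-product packaging makes the completeness/multiplicity bookkeeping cleaner (via the orthogonal splitting $\mathbb{R}^{nt}=\mathbb{R}^n\otimes\langle\mathbf{1}\rangle\oplus\mathbb{R}^n\otimes\mathbf{1}^\perp$), but the underlying eigenvectors are identical.
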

\begin{proof}
The Laplacian matrix of $G^{(t)}$ can be written as 
{\small{$$ L(G^{(t)})=
\left[ \begin{array}{ccccc} L(G)+(t-1)D &-A(G)&\ldots&&-A(G)\\
-A(G)&L(G)+(t-1)D&\ldots&&-A(G)\\
-A(G)&-A(G)&\ldots&&-A(G)\\ 
\vdots & & \ddots && \vdots \\ 
-A(G)&-A(G)&\ldots &&L(G)+(t-1)D
\end{array}\right],$$ }}
where $L(G)+(t-1)D$ is the matrix of order $n$ and $D$ is the diagonal matrix of the vertices degree.
Let $\{\mathbf{x}_{i}\}$ be an orthogonal basis of eigenvectors to $L(G)$, such that for each $i=1, \ldots, n,$ $\mathbf{x}_{i}$  is an eigenvector to ${\mu}_i.$
Consider the $tn-$vector $\mathbf{y}_{i}$ such that ${\mathbf{y}_{i}}^T = [ \mathbf{x}_{i} \;\; \mathbf{x}_{i} \;\; \cdots \;\; \mathbf{x}_{i}],$
for $i=1, \ldots, n.$ Observe that 
{\small{$$ L(G^{(t)})\mathbf{y}_i = 
\left[\begin{array}{c} L(G) \mathbf{x}_{i} + (t-1)D \mathbf{x}_{i} - (t-1)A(G)\mathbf{x}_{i} \\ 
\hline L(G) \mathbf{x}_{i} + (t-1)D \mathbf{x}_{i} - (t-1)A(G) \mathbf{x}_{i} \\ \hline  \vdots \\ \hline L(G) \mathbf{x}_{i} + (t-1)D \mathbf{x}_{i} - (t-1)A(G)\mathbf{x}_{i} \end{array}\right]= \left[\begin{array}{c} tL(G) \mathbf{x}_{i} \\ \hline tL(G) \mathbf{x}_{i} \\ \hline  \vdots \\ \hline tL(G) \mathbf{x}_{i} \end{array}\right] = \left[\begin{array}{c} t{\mu}_i \mathbf{x}_{i} \\ \hline t{\mu}_i \mathbf{x}_{i} \\ \hline  \vdots \\ \hline t{\mu}_i \mathbf{x}_{i} \end{array}\right]= t \mu_{i} \mathbf{y}_{i}.$$}} So $\mathbf{y}_{i}$ is an eigenvector to $t \mu_{i}.$ It easy to see that $\mathbf{y}_{i}^T \mathbf{y}_{j} = 0,$ since $\mathbf{x}_{i}^T \mathbf{x}_{j} = 0,$ for all $i \neq j$, $1 \leq i,j \leq n.$
Consider the $tn$-vector $E_i$ of the form ${E_i}^T = [{\alpha}_{i1} \;\;  {\alpha}_{i2} \;\; \cdots \;\; {\alpha}_{it}]$
such that, ${\alpha}_{ik} \in {\Re}^n$ for all $1 \leq k \leq t$. For each $i=1, \ldots, n$, let $e_i$ the $i-th$ standard unit basis vector.
For each $k=1,\ldots, t-1$, we define the $nt$-vector
$${({E_i}^k)}^T  = [ 0 \;\; \cdots \;\; 0 \;\; e_{ik} \;\; -e_{i(k+1)} \;\; 0 \;\; \cdots \;\; 0]$$
and we get  $L(G^{(t)}){E_i}^k = td_i {E_i}^k.$
So ${E_i}^1, {E_i}^2, \ldots, {E_i}^{t-1}$ are eigenvectors to $td_i.$ Now, we need to prove that the set $\{{E_i}^1, {E_i}^2, \ldots, {E_i}^{t-1}\}$ is linearly independent. Suppose that $\sum_{j=1}^{t-1} c_j {E_i}^{j}=0$. So $c_1=c_{t-1}=0$ and $-c_j+c_{j+1}=0$, for all $j=1,\ldots,t-2.$ Then $c_j=0$, $1 \leq j \leq t-1$ which implies that $td_{i}$ is an eigenvalue of multiplicity at least $t-1.$ Also, $\mathbf{x}_{j}^{T} E_{i}^{k}=0$ for $i \neq j$, $1 \leq i,j \leq n$ and $1 \leq k \leq t-1$ and the result follows.
\end{proof}

\begin{theorem}\label{thm2}
If $G$ be is a graph on $n$ vertices, the signless Laplacian eigenvalues of $G^{(t)}$ are $t q_1, \ldots, t q_{n}$ and
$td_1,\ldots,td_n$, where each $td_i$ has multiplicity $(t-1)$ for $i=1, \ldots,n$.
\end{theorem}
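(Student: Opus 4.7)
The plan is to mirror the proof of Theorem~\ref{thm1} step by step, substituting $Q$ for $L$ and flipping the sign on the off-diagonal $A(G)$ blocks. A direct check gives the block form
\[
Q(G^{(t)})=\left[\begin{array}{cccc} Q(G)+(t-1)D & A(G) & \cdots & A(G)\\ A(G) & Q(G)+(t-1)D & \cdots & A(G)\\ \vdots & & \ddots & \vdots \\ A(G) & A(G) & \cdots & Q(G)+(t-1)D\end{array}\right],
\]
which differs from the expression for $L(G^{(t)})$ only by the sign swap $-A(G)\leftrightarrow +A(G)$ on the off-diagonal blocks and $L(G)\leftrightarrow Q(G)$ on the diagonal.

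First I would let $\mathbf{z}_1,\ldots,\mathbf{z}_n$ be an orthogonal basis of eigenvectors of $Q(G)$ corresponding to $q_1,\ldots,q_n$, and form the $tn$-vector $\mathbf{y}_i$ whose $t$ consecutive $n$-blocks each equal $\mathbf{z}_i$. The block-wise computation parallels Theorem~\ref{thm1}: every block row of $Q(G^{(t)})\mathbf{y}_i$ reduces to $Q(G)\mathbf{z}_i+(t-1)(D+A(G))\mathbf{z}_i = tQ(G)\mathbf{z}_i = tq_i\mathbf{z}_i$, so $\mathbf{y}_i$ is an eigenvector for $tq_i$, and orthogonality of the $\mathbf{y}_i$ is inherited from that of the $\mathbf{z}_i$.

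Next I would reuse the difference vectors $E_i^k$ of Theorem~\ref{thm1} verbatim. The key observation is that flipping the off-diagonal sign does not change the corresponding eigenvalue: in block row $k$, the diagonal contribution is $(Q(G)+(t-1)D)e_i = (tD+A(G))e_i$, and the off-diagonal contribution from block $k+1$ is $A(G)(-e_i) = -A(G)e_i$, so the $A(G)$-terms cancel and leave $td_i e_i$. Block row $k+1$ yields $-td_i e_i$ symmetrically, and every other block row contributes $A(G)(e_i-e_i)=0$. Thus $Q(G^{(t)})E_i^k = td_i E_i^k$, and the linear-independence argument of Theorem~\ref{thm1} carries over unchanged, giving multiplicity at least $t-1$ for each $td_i$.

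Counting $n$ eigenvectors $\mathbf{y}_i$ together with $n(t-1)$ eigenvectors $E_i^k$, and using the same orthogonality argument between the two families as in Theorem~\ref{thm1}, yields $tn$ linearly independent eigenvectors, which account for the full spectrum of $Q(G^{(t)})$. I do not foresee a genuine obstacle: the single subtlety is noticing that the off-diagonal sign flip is exactly compensated in both computations, because the $A(G)$ contributions cancel for the $E_i^k$ and combine constructively with $D$ to produce $Q(G)$ for the $\mathbf{y}_i$.
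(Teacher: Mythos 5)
Your proposal is correct and is essentially the paper's own argument: the paper states Theorem~\ref{thm2} follows from Theorem~\ref{thm1} by using the same vectors $\mathbf{y}_i$ (built from eigenvectors of $Q(G)$) and the same difference vectors $E_i^k$, which is exactly what you do. Your explicit verification that the sign flip on the off-diagonal $A(G)$ blocks still yields $tq_i$ and $td_i$ is a welcome elaboration of the paper's one-line appeal to analogy.
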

\begin{proof}
The signless Laplacian matrix of $G^{(t)}$ can be written as 
{\small{$$ Q(G^{(t)})=
\left[ \begin{array}{ccccc} Q(G)+(t-1)D & A(G)& \ldots&& A(G)\\ 
A(G)&Q(G)+(t-1)D& \ldots&& A(G)\\ 
A(G) & A(G)&\ldots&& A(G)\\ 
\vdots & & \ddots && \vdots \\  
A(G) & A(G)&\ldots & & Q(G)+(t-1)D
\end{array}\right],$$ }}
where $Q(G)+(t-1)D$ is the matrix of order $n$ and $D$ is the diagonal matrix of the vertices degree.
The proof follows analogously to Theorem \ref{thm1} taking the same eigenvectors ${\mathbf{y}_{i}}^T =[\mathbf{x}_{i} \;\; \mathbf{x}_{i} \;\; \cdots \;\; \mathbf{x}_{i}],$
$\mathbf{y}_{i}$ is an eigenvector to $q_{i}$ for $i=1, \ldots, n,$ and also
$${({E_i}^k)}^T  = [0 \;\; \cdots \;\; 0 \;\; e_{ik} \;\; -e_{i(k+1)} \;\; 0 \;\; \cdots \;\; 0]$$
for each $i=1,\ldots,n$ and  $k=1,\ldots, t-1,$ and the result follows.
\end{proof}



\begin{theorem}\label{thm4}
If $G$ is a graph on $n$ vertices, the signless Laplacian eigenvalues of $\overline{G^{(t)}}$ are $t \overline{q}_1 + 2(t-1), \ldots, t \overline{q}_{n} + 2(t-1)$ and
$tn -td_1-2,\ldots,tn -td_n-2$, where each $tn -td_i -2$ has multiplicity $(t-1)$ for $i=1, \ldots,n$.
\end{theorem}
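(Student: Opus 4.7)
The plan is to imitate the strategy used in the proofs of Theorems~\ref{thm1} and~\ref{thm2}, adjusting for the fact that in $\overline{G^{(t)}}$ the $t$ copies of a single vertex of $G$ are now mutually adjacent (whereas in $G^{(t)}$ they were independent). First I would write $Q(\overline{G^{(t)}})$ as a $t\times t$ block matrix with $n\times n$ blocks, ordering the vertices so that the $k$-th block gathers the $k$-th copy of each vertex of $G$. A short degree/adjacency count then shows that every diagonal block equals $Q(\overline{G}) + (t-1)\overline{D} + (t-1)I_n$ and every off-diagonal block equals $A(\overline{G}) + I_n$, where $\overline{D}$ denotes the diagonal degree matrix of $\overline{G}$. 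The extra $I_n$ appearing off the diagonal is precisely what distinguishes this situation from Theorem~\ref{thm2}, and it is the source of the shifts $+2(t-1)$ and $-2$ in the two eigenvalue families.

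For the first family I would fix an orthogonal basis $\{\mathbf{x}_i\}$ of eigenvectors of $Q(\overline{G})$, with $Q(\overline{G})\mathbf{x}_i = \overline{q}_i\mathbf{x}_i$, and form the $tn$-vectors $\mathbf{y}_i = [\mathbf{x}_i^T,\ldots,\mathbf{x}_i^T]^T$. Multiplying blockwise, every block-row of $Q(\overline{G^{(t)}})\mathbf{y}_i$ equals $(Q(\overline{G}) + (t-1)\overline{D} + (t-1)I_n)\mathbf{x}_i + (t-1)(A(\overline{G}) + I_n)\mathbf{x}_i$, which collapses to $tQ(\overline{G})\mathbf{x}_i + 2(t-1)\mathbf{x}_i = (t\overline{q}_i + 2(t-1))\mathbf{x}_i$. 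This accounts for the $n$ eigenvalues $t\overline{q}_i + 2(t-1)$.

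For the second family, mirroring Theorem~\ref{thm1}, I would define for each $i=1,\ldots,n$ and each $k=1,\ldots,t-1$ the $tn$-vector $E_i^k$ whose $k$-th block is $e_i$, whose $(k+1)$-th block is $-e_i$, and whose remaining blocks are zero. Since all off-diagonal blocks are equal, their contribution on any block-row other than positions $k$ and $k+1$ cancels; on positions $k$ and $k+1$, the off-diagonal piece $\mp(A(\overline{G})+I_n)e_i$ cancels the $A(\overline{G})$-part of the diagonal block, leaving $\pm(t\overline{D} + (t-2)I_n)e_i = \pm(tn - td_i - 2)e_i$. Hence $E_i^k$ is an eigenvector of eigenvalue $tn - td_i - 2$, and the linear-independence argument of Theorem~\ref{thm1} applied to $\{E_i^1,\ldots,E_i^{t-1}\}$ shows that this eigenvalue occurs with multiplicity at least $t-1$ for each $i$.

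To conclude I would count dimensions and check orthogonality. The two families together contribute $n + n(t-1) = tn$ eigenvalues, which matches the size of $Q(\overline{G^{(t)}})$; the two families are orthogonal because $\mathbf{1}_t^T(e_k - e_{k+1}) = 0$, while orthogonality and independence within each family are handled exactly as in Theorem~\ref{thm1}. The only genuine obstacle is bookkeeping: one has to carry the extra $I_n$ in the off-diagonal blocks through both computations, and it is this term that produces the advertised shifts $+2(t-1)$ and $-2$ in the eigenvalues.
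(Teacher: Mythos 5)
Your proposal is correct and follows essentially the same route as the paper: the same block decomposition of $Q(\overline{G^{(t)}})$ (diagonal blocks $Q(\overline{G})+(t-1)D(\overline{G})+(t-1)I_n$, off-diagonal blocks $A(\overline{G})+I_n$), the same repeated-block eigenvectors $\mathbf{y}_i$ for the eigenvalues $t\overline{q}_i+2(t-1)$, and the same difference vectors $E_i^k$ for the eigenvalues $tn-td_i-2$ with multiplicity $t-1$. The bookkeeping of the extra $I_n$ in the off-diagonal blocks and the final count of $tn$ independent eigenvectors are both handled correctly, so there is nothing to add.
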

\begin{proof}
With a convenient labeling of the blow-up graph $G^{(t)},$ we can write the signless Laplacian of the complement of $G^{(t)}$ as the following
{\footnotesize{$$ Q(\overline{G^{(t)}})=
\left[ \begin{array}{cccc}
(tn-2)I_{n}-tD+J_{n}-A & \ldots &  J_{n} - A(G)\\
J_{n} - A(G)  & \ddots & J_{n} - A(G)\\
\vdots       &   \vdots                 & \vdots \\
J_{n} - A(G) & \cdots &  (tn-2)I_{n}-tD+J_{n}-A
\end{array}\right].$$ }}
Considering the $tn-$vectors ${E_i}^k$ for each $k=1,\ldots,n$ we get
$$Q(\overline{G^{(t)}}) {E_i}^k = \left(
\begin{array}{c}
0 \;\; \ldots \;\; (tn-2-td_{i})  \;\; \ldots \;\; 0  \;\; 0 \;\; \ldots \;\; -(tn-2-td_{i})  \;\; \ldots \;\; 0 \;\; 0 \;\; \ldots  \;\; 0
\end{array}\right)^{T}$$
and then $Q(\overline{G^{(t)}}){E_i}^k = (tn-2-td_i) {E_i}^k.$
So ${E_i}^1, {E_i}^2, \ldots, {E_i}^{t-1}$ are eigenvectors to $tn-2-td_i$ for all $i=1,\ldots,n.$
Since the set $\{{E_i}^1, {E_i}^2, \ldots, {E_i}^{t-1}\}$ is linearly independent, the eigenvalue $tn-2-td_i$ has multiplicity at least $t-1.$ Therefore, $nt-n$ eigenvalues of $Q(\overline{G^{(t)}})$ are known and we need to find the remaining $n.$
One can easily rewrite $Q(\overline{G^{(t)}})$ in a such way that the block matrix $Q(\overline{G})+(t-1)nI_n-(t-1)D$ appears in the main diagonal and $A(\overline{G})+I_n$ in the remaining positions.
Let $\{\mathbf{x}_{i}\}$ be an orthogonal basis of eigenvectors to $Q(\overline{G})$, such that for each $i=1, \ldots, n,$ $\mathbf{x}_{i}$  is an eigenvector to ${\overline{q}}_i.$
Consider the $tn-$vectors $\mathbf{y}_{i}$ such that ${\mathbf{y}_{i}}^T = [ \mathbf{x}_{i} \;\; \mathbf{x}_{i} \;\; \cdots \;\; \mathbf{x}_{i}],$
for $i=1, \ldots, n.$ Observe that
{\small{$$ Q(\overline{G^{(t)}})\mathbf{y}_i = \left[\begin{array}{c} Q(\overline{G}) \mathbf{x}_{i} + (t-1)n \mathbf{x}_{i} - (t-1)D\mathbf{x}_{i} + (t-1)(A(\overline{G})+I_n)\mathbf{x}_{i}\\ \hline Q(\overline{G}) \mathbf{x}_{i} + (t-1)n \mathbf{x}_{i} - (t-1)D\mathbf{x}_{i} + (t-1)(A(\overline{G})+I_n)\mathbf{x}_{i} \\ \hline  \vdots \\ \hline Q(\overline{G}) \mathbf{x}_{i} + (t-1)n \mathbf{x}_{i} - (t-1)D\mathbf{x}_{i} + (t-1)(A(\overline{G})+I_n)\mathbf{x}_{i} \end{array}\right]=$$}}
{\small{$$= \left[\begin{array}{c} {\overline{q}}_i \mathbf{x}_{i} + (t-1)(n-1)I_n \mathbf{x}_{i} - (t-1)D\mathbf{x}_{i} + (t-1)A(\overline{G})\mathbf{x}_{i} + 2(t-1)\mathbf{x}_{i} \\ \hline {\overline{q}}_i \mathbf{x}_{i} + (t-1)(n-1)I_n \mathbf{x}_{i} - (t-1)D\mathbf{x}_{i} + (t-1)A(\overline{G})\mathbf{x}_{i} + 2(t-1)\mathbf{x}_{i}  \\ \hline  \vdots \\ \hline  {\overline{q}}_i \mathbf{x}_{i} + (t-1)(n-1)I_n \mathbf{x}_{i} - (t-1)D\mathbf{x}_{i} + (t-1)A(\overline{G})\mathbf{x}_{i} + 2(t-1)\mathbf{x}_{i} \end{array}\right]=$$}}
{\small{$$  = \left[\begin{array}{c} \overline{q_i} \mathbf{x}_{i} + ((t-1)(n-1)I_n - (t-1)D) \mathbf{x}_{i} + (t-1)A(\overline{G})\mathbf{x}_{i} + 2(t-1)\mathbf{x}_{i}\\ \hline \overline{q_i} \mathbf{x}_{i} + ((t-1)(n-1)I_n - (t-1)D) \mathbf{x}_{i} + (t-1)A(\overline{G})\mathbf{x}_{i} + 2(t-1)\mathbf{x}_{i} \\ \hline  \vdots \\ \hline \overline{q_i} \mathbf{x}_{i} + ((t-1)(n-1)I_n - (t-1)D) \mathbf{x}_{i} + (t-1)A(\overline{G})\mathbf{x}_{i} + 2(t-1)\mathbf{x}_{i} \end{array}\right]=$$}}
{\small{$$\left[\begin{array}{c} {\overline{q}}_i \mathbf{x}_{i} + (t-1)D(\overline{G})\mathbf{x}_{i} + (t-1)A(\overline{G})\mathbf{x}_{i} + 2(t-1)\mathbf{x}_{i} \\ \hline {\overline{q}}_i \mathbf{x}_{i} + (t-1)D(\overline{G})\mathbf{x}_{i} + (t-1)A(\overline{G})\mathbf{x}_{i} + 2(t-1)\mathbf{x}_{i}  \\ \hline  \vdots \\ \hline  {\overline{q}}_i \mathbf{x}_{i} + (t-1)D(\overline{G})\mathbf{x}_{i} + (t-1)A(\overline{G})\mathbf{x}_{i} + 2(t-1)\mathbf{x}_{i} \end{array}\right]=$$}}
{\small{$$  = \left[\begin{array}{c} \overline{q_i} \mathbf{x}_{i} + (t-1)Q(\overline{G}) \mathbf{x}_{i} + 2(t-1)\mathbf{x}_{i}\\ \hline \overline{q_i} \mathbf{x}_{i} + (t-1)Q(\overline{G}) \mathbf{x}_{i} + 2(t-1)\mathbf{x}_{i} \\ \hline  \vdots \\ \hline \overline{q_i} \mathbf{x}_{i} + (t-1)Q(\overline{G}) \mathbf{x}_{i} + 2(t-1)\mathbf{x}_{i} \end{array}\right]= \left[\begin{array}{c} t{\overline{q}}_i \mathbf{x}_{i} + 2(t-1)\mathbf{x}_{i} \\ \hline t{\overline{q}}_i \mathbf{x}_{i} + 2(t-1)\mathbf{x}_{i}\\ \hline  \vdots \\ \hline  t{\overline{q}}_i \mathbf{x}_{i} + 2(t-1)\mathbf{x}_{i} \end{array}\right]= (t {\overline{q}}_{i}+ 2(t-1)) \mathbf{y}_{i}.$$}}
So $\mathbf{y}_{i}$ is an eigenvector to $t {\overline{q}}_{i}+ 2(t-1).$ It easy to see that $\mathbf{y}_{i}^T \mathbf{y}_{j} = 0,$ since $\mathbf{x}_{i}^T \mathbf{x}_{j} = 0,$ for all $i \neq j$, $1 \leq i,j \leq n$ and the result follows.
\end{proof}



\end{document}